\newtheorem{theorem}[subsection]{Theorem}
\theoremstyle{plain}
\newtheorem{corollary}[subsection]{Corollary}
\numberwithin{equation}{subsection}
\theoremstyle{plain}
\theoremstyle{definition}
\theoremstyle{remark}
\theoremstyle{plain}
\begin{document}
\title{\textbf{On the Derivatives of Bivariate Fibonacci Polynomials}}
\author{Tuba \c{C}akmak and Erdal Karaduman}
\address{Atat\"{u}rk University, Faculty of Science, Department of
Mathematics, 25240 Erzurum, Turkey }
\email{cakmaktuba@yahoo.com, eduman@atauni.edu.tr}

\begin{abstract}
{In this study, the new algebraic properties related to bivariate Fibonacci
polynomials has been given. We present the partial derivatives of these
polynomials in the form of convolution of bivariate Fibonacci polynomials.
Also, we define a new recurrence relation for the }${r}${-th partial
derivative sequence of bivariate Fibonacci polynomials.}
\end{abstract}

\keywords{ ${k}${-Fibonacci sequences, Bivariate Fibonacci polynomials,
Partial derivatives of bivariate Fibonacci polynomials}}
\maketitle

\section{Introduction}

In modern science, there is a huge interest in the theory and application of
the Golden Section and Fibonacci numbers in [1-17]. The Fibonacci numbers $%
F_{n}$\ are the terms of the sequence $0,1,1,2,5,...$ where $%
F_{n}=F_{n-1}+F_{n-2},$ $n\geq 2,$ with the initial values $F_{0}=0$ and $%
F_{1}=1$. Falcon and Plaza \cite{4} introduced a general Fibonacci sequence
that generalizes the classical Fibonacci sequence. These general $k$%
-Fibonacci numbers $F_{k,n}$ are defined by $F_{k,n}=kF_{k,n-1}+F_{k,n-2},$ $%
n\geq 2,$ with the initial conditions $F_{k,0}=0$ and $F_{k,1}=1$. If $k$ is
a real variable then $F_{k,n}$ will be equal to $F_{x,n}$ and they
correspond to Fibonacci polynomials.

\bigskip

Fibonacci polynomials were studied in 1883 by the Belgian mathematician
Eugene Charles and German mathematician E. Jacobsthal. The polynomials $%
F_{n}(x)$ were defined by the recurrence relation\ 
\begin{equation*}
F_{n}(x)=xF_{n-1}(x)+F_{n-2}(x),\text{ \ \ }n\geq 2
\end{equation*}%
where $F_{0}(x)=0$ and $F_{1}(x)=1$. The Fibonacci polynomials and their
relationship to diagonals of Pascal's triangle were generalized by Hoggat
and B\i cknell \cite{5}. Some relationships between Zeckendorf theorem and
Fibonacci polynomials \cite{6} were examined. Their divisibility properties
had been found by Gerard Jacob \cite{10} and Webb \cite{17}, Hoggat and B\i
cknell \cite{7} found the roots of Fibonacci polynomials of degree n. In 
\cite{11}, $h(x)$-Fibonacci polynomials were defined that generalize both
Catalan's and Bryd's Fibonacci polyomials. In \cite{4}, Falcon and Plaza
investigated the derivatives of these polynomials and they had given many
relations for the derivatives of Fibonacci polynomials.

\bigskip

Afterwards, some new generalizations were identified about Fibonacci
polynomials which were given by Catalan. One of them is bivariate Fibonacci
polynomials defined as%
\begin{equation*}
F_{n}(x,y)=xF_{n-1}(x,y)+yF_{n-2}(x,y),\text{ \ \ }n\geq 2.
\end{equation*}%
Many properties for selected values of the variables and some recurrence
relations of bivariate Fibonacci and Lucas polynomials were obtained \cite{1}%
. In \cite{9}, the properties of bivariate Fibonacci polynomials order $k$
had been investigated in terms of the generating functions. M.N.S.Swamy \cite%
{12} derived some new properties concerning the derivatives of bivariate
Fibonacci and Lucas polynomials. In \cite{2, 3} the works of Filipponi and
Horadam revealed the first and second order derivative sequences of
Fibonacci and Lucas polynomials and these results had been extended to the $%
k $-th derivative case as conjectured in \cite{3} and then had been
confirmed in \cite{16}. Filipponi and Horadam \cite{3} considered the
partial derivative sequences of bivariate second order recurrence
polynomials. In \cite{15}, Yu and Liang extended some of the results and
derived some identities involving the partial derivative sequences of
bivariate Fibonacci and Lucas polynomials. One of the generalization of
Fibonacci polynomials is given by Tu\u{g}lu, Ko\c{c}er and Stakhov \cite{13}%
. Also in \cite{14}, Claudio de Jesus Pita Ruiz Velauco define bivariate $s$%
-Fibopolynomials and give some of the derivative identities.

\bigskip

This paper is based on the definition of Falcon and Plaza \cite{4} and Tu%
\u{g}lu, Ko\c{c}er, Stakhov \cite{13}. We study on the derivatives of
bivariate Fibonacci polynomials in the form of convolution of these
polynomials. In this sense the present paper is organised as follows. In
Section 2, a brief summary of the previous results obtained by Falcon and
Plaza in \cite{4} is given. In Section 3, some basic facts are given about
bivariate Fibonacci Polynomials. Section 4 presents some new relations
related with the derivatives of bivariate Fibonacci Polynomials and gives a
new recurrence relation for the $r$-th partial derivative sequence.

\bigskip 

\section{The Fibonacci Polynomials}

In this section, we will give some basic facts related to Fibonacci
polynomials. These results and more can be found in \cite{4}.

The $k$-Fibonacci sequence, namely $\{F_{k,n}\}_{n\in 
\mathbb{N}
}$ have been defined recurrently by $F_{k,n+1}=kF_{k,n}+F_{k,n-1}$ for $%
n\geq 1$ and any positive real number $k$, with initial conditions $%
F_{k,0}=0,F_{k,1}=1$. If $k$ is a real variable $x$ then $F_{k,n}=F_{x,n}$
and they correspond to the Fibonacci polynomials. The Fibonacci polynomials
are defined as follow 
\begin{equation}
F_{n+1}\left( x\right) =\left\{ 
\begin{array}{c}
1,\text{ \ \ \ \ \ \ \ \ \ \ \ \ \ \ \ \ \ \ \ \ \ \ \ \ \ \ \ \ }n=0 \\ 
x,\text{ \ \ \ \ \ \ \ \ \ \ \ \ \ \ \ \ \ \ \ \ \ \ \ \ \ \ \ \ }n=1 \\ 
xF_{n}\left( x\right) +F_{n-1}\left( x\right) ,\text{ \ }n\geq 2%
\end{array}%
\right. .  \tag{2.1}  \label{2.1}
\end{equation}%
According to this definition the sequence of the Fibonacci polynomials is%
\begin{equation*}
\{F_{n}(x)\}=\{1,x,x^{2}+1,x^{3}+2x,x^{4}+3x^{2}+1,\ldots \}.
\end{equation*}%
Note that the $k$-Fibonacci polynomials are the natural extension of the $k$%
-Fibonacci numbers. The general term of the Fibonacci polynomials is 
\begin{equation}
F_{n+1}(x)=\dsum\limits_{i=0}^{\lfloor \frac{n}{2}\rfloor }\binom{n-i}{i}%
x^{n-2i},\text{ \ \ }n\geq 0.  \tag{2.2}  \label{2.2}
\end{equation}%
On the other hand, by deriving the elements of the sequence of Fibonacci
polynomials the following derivative sequence is obtained: 
\begin{equation*}
\{F_{n}^{^{\prime }}(x)\}=\{0,1,2x,3x^{2}+2,4x^{3}+6x,\ldots \}.
\end{equation*}%
The general term of the derivative sequence of Fibonacci polynomials is
given as 
\begin{equation}
F_{n+1}^{^{\prime }}(x)=\dsum\limits_{i=0}^{\lfloor \frac{n-1}{2}\rfloor }%
\binom{n-i}{i}\left( n-2i\right) x^{n-1-2i},\text{ \ \ }n\geq 1  \tag{2.3}
\label{2.3}
\end{equation}%
by deriving equation \ref{2.2} where $F_{1}^{^{\prime }}(x)=0$.

\bigskip 

\section{\protect\bigskip Bivariate Fibonacci Polynomials}

One of the generalization of Fibonacci type polynomials and also Fibonacci
numbers is bivariate Fibonacci polynomials. In \cite{12}, the generalized
bivariate Fibonacci polynomials are defined as 
\begin{equation}
H_{n}(x,y)=xH_{n-1}(x,y)+yH_{n-2}(x,y),\text{ \ \ }n\geq 2.  \tag{3.1}
\label{3.1}
\end{equation}%
with initial conditions $H_{0}(x,y)=a_{0},$ $H_{1}(x,y)=a_{1}$ and it is
assumed $y\neq 0$ as well as $x^{2}+4y\neq 0$. Catalani \cite{12}, set $%
a_{0}=0$, $a_{1}=1$ and he obtained the bivariate Fibonacci polynomials $%
F_{n}(x,y)$; then with $a_{0}=2$, $a_{1}=x$ he obtained the bivariate Lucas
polynomials $L_{n}(x,y)$, where,%
\begin{equation}
F_{n}(x,y)=xF_{n-1}(x,y)+yF_{n-2}(x,y),\text{ \ \ }n\geq 2  \tag{3.2}
\label{3.2}
\end{equation}%
and%
\begin{equation}
L_{n}(x,y)=x_{n-1}L(x,y)+yL_{n-2}(x,y),\text{ \ \ }n\geq 2.  \tag{3.3}
\label{3.3}
\end{equation}

So, the first bivariate Fibonacci polynomials are \newline
\begin{equation*}
\{F_{n}(x,y)\}=\{0,1,x,x^{2}+y,x^{3}+2xy,x^{4}+3x^{2}y+y^{2},\ldots \}.
\end{equation*}%
In \cite{8}, the general term of bivariate Fibonacci polynomials has been
given by

\begin{equation}
F_{n+1}(x,y)=\dsum\limits_{i=0}^{\lfloor \frac{n}{2}\rfloor }\binom{n-i}{i}%
x^{n-2i}y^{i},\text{ \ \ }n\geq 0.  \tag{3.4}  \label{3.4}
\end{equation}%
Some of relations between bivariate Fibonacci and Lucas polynomials as
follows:

\bigskip

\begin{itemize}
\item The following equation appear in \cite[eq. (2.8)]{12} for all $n\in 
\mathbb{Z}
$,%
\begin{equation}
L_{n}(x,y)=F_{n+1}(x,y)+yF_{n-1}(x,y),  \tag{3.5}  \label{3.5}
\end{equation}

\item The following equation appear in \cite[eq. (2.11)]{12} for all $n\in 
\mathbb{Z}
$, 
\begin{equation}
\left( x^{2}+4y\right) F_{n}(x,y)=L_{n+1}(x,y)+yL_{n-1}(x,y)  \tag{3.6}
\label{3.6}
\end{equation}

\item The following equation can be seen in \cite[eq. (3.10)]{12}%
\begin{equation}
\frac{\partial L_{n}(x,y)}{\partial x}=nF_{n}(x,y).  \tag{3.7}  \label{3.7}
\end{equation}
\end{itemize}

\section{Expression of the Derivative of Bivariate Fibonacci Polynomials}

In this section, we establish many formulas and relations for the
derivatives of the bivariate Fibonacci polynomials. Their derivatives are
given as convolution of bivariate Fibonacci polynomials. This fact allows us
to present a family of integer sequences in a new and direct way. Also, we
give a new recurrence relation for the $r$-th partial derivative sequence.

If the equation \ref{3.4} differantiate with respect to $x$ and $y$ the
folllowing equalities are obtained

\bigskip\ 
\begin{equation}
\frac{\partial F_{n+1}(x,y)}{\partial x}=\dsum\limits_{i=0}^{\lfloor \frac{%
n-1}{2}\rfloor }\binom{n-i}{i}\left( n-2i\right) x^{n-1-2i}y^{i},\text{ \ \ }%
n\geq 1  \tag{4.1}  \label{4.1}
\end{equation}

\begin{equation}
\frac{\partial F_{n+1}(x,y)}{\partial y}=\dsum\limits_{i=0}^{\lfloor \frac{%
n-1}{2}\rfloor }\binom{n-i}{i}\left( i\right) x^{n-2i}y^{i-1},\text{ \ \ }%
n\geq 1  \tag{4.2}  \label{4.2}
\end{equation}%
where $\frac{\partial F_{1}(x,y)}{\partial x}=\frac{\partial F_{1}(x,y)}{%
\partial y}=0$ and by \cite[eq. (3.12)]{12} 
\begin{equation}
\frac{\partial F_{n}(x,y)}{\partial x}=\frac{\partial F_{n+1}(x,y)}{\partial
y}  \tag{4.3}  \label{4.3}
\end{equation}%
is written.

\bigskip

The following theorem is a special form of \cite[Theorem (2.a)]{15}.

\bigskip

\begin{theorem}
If $\frac{\partial F_{1}(x,y)}{\partial x}=0$, for $n>1,$ then 
\begin{equation*}
\frac{\partial F_{n}(x,y)}{\partial x}=\dsum%
\limits_{i=1}^{n-1}F_{i}(x,y)F_{n-i}(x,y).
\end{equation*}
\end{theorem}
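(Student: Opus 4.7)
The plan is to prove the identity by induction on $n$, relying on nothing more than the defining recurrence $F_n(x,y)=xF_{n-1}(x,y)+yF_{n-2}(x,y)$ from \ref{3.2} together with the initial conditions $F_0(x,y)=0$, $F_1(x,y)=1$. The base case $n=2$ is immediate from $F_2(x,y)=x$, which gives $\partial F_2/\partial x=1=F_1(x,y)F_1(x,y)$, matching the right-hand side.

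For the inductive step, suppose the identity holds for all indices smaller than $n$. Differentiating \ref{3.2} with respect to $x$ produces
\[
\frac{\partial F_n(x,y)}{\partial x}=F_{n-1}(x,y)+x\,\frac{\partial F_{n-1}(x,y)}{\partial x}+y\,\frac{\partial F_{n-2}(x,y)}{\partial x},
\]
and substituting the inductive hypothesis into the two derivative terms on the right yields
\[
\frac{\partial F_n(x,y)}{\partial x}=F_{n-1}(x,y)+x\sum_{i=1}^{n-2}F_i(x,y)F_{n-1-i}(x,y)+y\sum_{i=1}^{n-3}F_i(x,y)F_{n-2-i}(x,y).
\]

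Next I would extend the second sum up to $i=n-2$, which is harmless since $F_0(x,y)=0$, and then pair the two sums term by term. For each $1\le i\le n-2$,
\[
xF_i(x,y)F_{n-1-i}(x,y)+yF_i(x,y)F_{n-2-i}(x,y)=F_i(x,y)\bigl(xF_{n-1-i}(x,y)+yF_{n-2-i}(x,y)\bigr)=F_i(x,y)F_{n-i}(x,y),
\]
where the last equality is the recurrence \ref{3.2} applied to $F_{n-i}$ (valid since $n-i\ge 2$). The two convolutions therefore collapse into $\sum_{i=1}^{n-2}F_i(x,y)F_{n-i}(x,y)$, and absorbing the leftover $F_{n-1}(x,y)=F_{n-1}(x,y)F_1(x,y)$ as the missing $i=n-1$ term produces the desired $\sum_{i=1}^{n-1}F_i(x,y)F_{n-i}(x,y)$.

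I do not expect a genuine obstacle here; the only delicate point is the index bookkeeping, specifically verifying that the $y$-derivative convolution (which nominally runs only to $n-3$) can be extended painlessly to $n-2$ via $F_0=0$, so that the $x$- and $y$-contributions line up perfectly under the recurrence and a single boundary term $F_{n-1}$ supplies the missing summand at $i=n-1$.
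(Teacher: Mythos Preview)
Your proof is correct and follows essentially the same strategy as the paper: induction on $n$, differentiating the recurrence \ref{3.2}, and then applying that same recurrence to the second factor to merge the two resulting convolutions into one. The only cosmetic difference is bookkeeping at the boundary---you extend the shorter sum via $F_0=0$ and absorb a single leftover term $F_{n-1}=F_{n-1}F_1$, whereas the paper peels off the top term from the longer sum and uses $x=F_2(x,y)$ to absorb two leftovers.
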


\begin{proof}
We prove this conclusion by induction. For $n=2$ it is trivial, since%
\begin{equation*}
\dsum\limits_{i=1}^{2-1}F_{i}(x,y)F_{2-i}(x,y)=F_{1}(x,y)F_{1}(x,y)=1=\frac{%
\partial }{\partial x}\left( x\right) =\frac{\partial F_{2}(x,y)}{\partial x}%
.
\end{equation*}%
Let us suppose that the formula is true for $k\leq n$. Then 
\begin{eqnarray*}
\frac{\partial F_{n-1}(x,y)}{\partial x} &=&\dsum%
\limits_{i=1}^{n-2}F_{i}(x,y)F_{n-1-i}(x,y) \\
\frac{\partial F_{n}(x,y)}{\partial x} &=&\dsum%
\limits_{i=1}^{n-1}F_{i}(x,y)F_{n-i}(x,y).
\end{eqnarray*}%
By deriving the equation \ref{3.2} according to variable $x$ and using
previous expression we get 
\begin{eqnarray*}
\frac{\partial F_{n+1}(x,y)}{\partial x} &=&F_{n}(x,y)+x\frac{\partial
F_{n}(x,y)}{\partial x}+y\frac{\partial F_{n-1}(x,y)}{\partial x} \\
&=&F_{n}(x,y)+x\left( \dsum\limits_{i=1}^{n-1}F_{i}(x,y)F_{n-i}(x,y)\right)
+y\left( \dsum\limits_{i=1}^{n-2}F_{i}(x,y)F_{n-1-i}(x,y)\right)  \\
&=&F_{n}(x,y)+xF_{1}(x,y)F_{n-1}(x,y)+\dsum\limits_{i=1}^{n-1}F_{i}(x,y)
\left[ xF_{n-i}(x,y)+yF_{n-1-i}(x,y)\right]  \\
&=&F_{n}(x,y)F_{1}(x,y)+F_{n-1}(x,y)F_{2}(x,y)+\dsum%
\limits_{i=1}^{n-2}F_{i}(x,y)F_{n-i+1}(x,y) \\
&=&\dsum\limits_{i=1}^{n}F_{i}(x,y)F_{n-i+1}(x,y).
\end{eqnarray*}
\end{proof}

\bigskip

\begin{corollary}
For $n>1$%
\begin{equation*}
\frac{\partial F_{n}(x,y)}{\partial y}=\dsum%
\limits_{i=1}^{n-2}F_{i}(x,y)F_{n-1-i}(x,y)
\end{equation*}%
where $\frac{\partial F_{1}(x,y)}{\partial x}=0$.
\end{corollary}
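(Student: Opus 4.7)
The plan is to reduce the corollary directly to Theorem 4.1 via the mixed-derivative identity (4.3), rather than redoing an induction. Equation (4.3) states $\partial F_{n}(x,y)/\partial x=\partial F_{n+1}(x,y)/\partial y$, so by replacing $n$ with $n-1$ one immediately obtains
\begin{equation*}
\frac{\partial F_{n}(x,y)}{\partial y}=\frac{\partial F_{n-1}(x,y)}{\partial x}.
\end{equation*}
For $n>2$, Theorem 4.1 applied to index $n-1$ then yields
\begin{equation*}
\frac{\partial F_{n-1}(x,y)}{\partial x}=\sum_{i=1}^{n-2}F_{i}(x,y)F_{n-1-i}(x,y),
\end{equation*}
which is exactly the claimed identity.

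It remains to dispose of the boundary case $n=2$, where Theorem 4.1 does not apply to $F_{n-1}=F_{1}$. This case I would verify directly: $F_{2}(x,y)=x$, so $\partial F_{2}/\partial y=0$, while the right-hand side $\sum_{i=1}^{0}F_{i}F_{1-i}$ is the empty sum and equals $0$. Thus the formula holds with the standard empty-sum convention, and the proof is complete.

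The argument is essentially a one-line reduction, so there is no serious obstacle. The only mildly delicate point is being careful about the index shift in (4.3) and noting that the corollary's lower boundary $n>1$ pulls us one step below the range in which Theorem 4.1 was stated. An alternative route would be to differentiate (3.4) with respect to $y$ and reindex to match the convolution sum, but invoking (4.3) is much cleaner and emphasizes why the corollary is truly a corollary.
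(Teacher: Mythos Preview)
Your argument is correct and is exactly the intended derivation: the paper states this result as an immediate corollary of Theorem~4.1 without giving a separate proof, and the only available bridge is the identity (4.3), which you use in precisely the right way (shift $n\mapsto n-1$, then apply Theorem~4.1). Your explicit treatment of the boundary case $n=2$ via the empty-sum convention is a nice touch that the paper leaves implicit.
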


\bigskip

\begin{theorem}
\begin{equation*}
\frac{\partial F_{n+1}(x,y)}{\partial x}=\dsum\limits_{i=0}^{\lfloor \frac{%
n-1}{2}\rfloor }\left( -1\right) ^{i}\left( n-2i\right) F_{n-2i}(x,y)y^{i},%
\text{ \ \ }n\geq 1.
\end{equation*}
\end{theorem}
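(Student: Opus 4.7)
The natural approach is induction on $n$, using the defining recurrence $F_{n+1}(x,y)=xF_n(x,y)+yF_{n-1}(x,y)$ to propagate the claim. The base cases $n=1$ and $n=2$ follow by direct substitution, since $F_2=x$ and $F_3=x^2+y$ give $\partial_x F_2 = 1 = F_1$ and $\partial_x F_3 = 2x = 2F_2$.

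For the inductive step, I would differentiate the recurrence with respect to $x$ to obtain
\begin{equation*}
\frac{\partial F_{n+1}}{\partial x} = F_n + x\,\frac{\partial F_n}{\partial x} + y\,\frac{\partial F_{n-1}}{\partial x},
\end{equation*}
and then substitute the induction hypothesis into both derivatives on the right. The pivotal algebraic step is to rewrite each product $x F_{n-1-2i}$ using the identity $xF_m = F_{m+1} - yF_{m-1}$ (an immediate consequence of the recurrence). This converts $x \cdot \partial_x F_n$ into one sum of the form $\sum (-1)^i(n-1-2i)F_{n-2i}y^i$ and a shifted companion involving $F_{n-2-2i}y^{i+1}$; the latter combines with $y\,\partial_x F_{n-1}$ after reindexing $j=i+1$, with the coefficient collapsing from $(n-2-2i)-(n-1-2i)=-1$. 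Grouping the remaining pieces, the $i=0$ contribution $(n-1)F_n$ merges with the initial $F_n$ to yield $nF_n$, while for $j\geq 1$ the coefficient of $(-1)^j F_{n-2j}y^j$ is $(n-1-2j)+1 = n-2j$, reproducing the claimed expression for $G_n := \sum_{j=0}^{\lfloor(n-1)/2\rfloor}(-1)^j(n-2j)F_{n-2j}y^j$.

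The main obstacle I anticipate is bookkeeping with the floor-function upper limits, since $\lfloor(n-2)/2\rfloor$ and $\lfloor(n-3)/2\rfloor$ behave differently according to the parity of $n$. In the even case $n=2m$, the shifted sum produces an extra term at $j=m$ with factor $F_{n-2m}=F_0=0$, so it vanishes harmlessly; in the odd case $n=2m+1$, the top index $j=\lfloor(n-1)/2\rfloor=m$ exceeds $\lfloor(n-2)/2\rfloor$, but there the "missing" contribution from the first sum would have coefficient $n-1-2j=0$, so the uniform formula $n-2j$ still holds. Verifying these two boundary checks is the only non-routine part, after which the two sums combine cleanly into $G_n$, completing the induction.
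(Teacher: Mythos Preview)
Your proposal is correct and follows essentially the same strategy as the paper: induction on $n$, differentiating the recurrence $F_{n+1}=xF_n+yF_{n-1}$, substituting the hypothesis, and regrouping via the Fibonacci recurrence, with the parity of $n$ governing the boundary terms. The only cosmetic difference is that the paper first splits the coefficient $(n-1-2i)=(n-2i)-1$ and then uses the forward relation $xF_m+yF_{m-1}=F_{m+1}$ to merge the two sums, whereas you use the equivalent backward relation $xF_m=F_{m+1}-yF_{m-1}$; the resulting bookkeeping is identical.
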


\begin{proof}
We proof this by induction. It is clear that the claim is true for $n=1$.
Let us suppose that the claim is true for $n$. If n is an even integer by
induction hypothesis we get 
\begin{equation*}
\frac{\partial F_{2p+1}(x,y)}{\partial x}=\dsum\limits_{i=0}^{p-1}\left(
-1\right) ^{i}\left( 2p-2i\right) F_{2p-2i}(x,y)y^{i}
\end{equation*}%
and 
\begin{eqnarray*}
\frac{\partial F_{2p}(x,y)}{\partial x} &=&\dsum\limits_{i=0}^{p-1}\left(
-1\right) ^{i}\left( 2p-1-2i\right) F_{2p-1-2i}(x,y)y^{i} \\
&=&\dsum\limits_{i=0}^{p-1}\left( -1\right) ^{i}\left( 2p-2i\right)
F_{2p-1-2i}(x,y)y^{i}+\dsum\limits_{i=0}^{p-1}\left( -1\right) ^{1+i}\left(
2p-1-2i\right) y^{i}.
\end{eqnarray*}%
Now, we will show the equation is true for $n+1$ namely $2p+1$. By using \ref%
{3.2} we get 
\begin{equation*}
F_{2p+2}(x,y)=xF_{2p+1}(x,y)+yF_{2p}(x,y).
\end{equation*}%
Then, 
\begin{eqnarray*}
\frac{\partial F_{2p+2}(x,y)}{\partial x} &=&F_{2p+1}(x,y)+x\frac{\partial
F_{2p+1}(x,y)}{\partial x}+y\frac{\partial F_{2p}(x,y)}{\partial x} \\
&=&F_{2p+1}(x,y)+x\dsum\limits_{i=0}^{p-1}\left( -1\right) ^{i}\left(
2p-2i\right) y^{i}\left[ xF_{2p-2i}(x,y)+yF_{2p-1-2i}(x,y)\right]  \\
&&+y\dsum\limits_{i=0}^{p-1}\left( -1\right) ^{1+i}\left( 2p-1-2i\right)
y^{i} \\
&=&\left( 2p+1\right) F_{2p+1}(x,y)-p\left( 2p-1\right)
F_{2p-1}(x,y)+...+\left( -1\right) ^{p}F_{1}(x,y)y^{p} \\
&=&\dsum\limits_{i=0}^{p}\left( -1\right) ^{i}\left( 2p+1-2i\right)
F_{2p+1-2i}(x,y)y^{i} \\
&=&\dsum\limits_{i=0}^{\lfloor \frac{n-1}{2}\rfloor }\left( -1\right)
^{i}\left( n-2i\right) F_{n-2i}(x,y)y^{i}=\frac{\partial F_{n+1}(x,y)}{%
\partial x}.
\end{eqnarray*}%
The similar proof can be given for the case $n$ is an odd integer. Thus the
result follows for all natural numbers. 
\end{proof}

\bigskip

\begin{corollary}
For \ $n\geq 3$%
\begin{equation*}
\frac{\partial F_{n}(x,y)}{\partial y}=\dsum\limits_{i=0}^{\lfloor \frac{n-3%
}{2}\rfloor }\left( n-2-2i\right) F_{n-2-2i}(x,y)y^{i}.
\end{equation*}
\end{corollary}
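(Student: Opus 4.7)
The natural plan is to deduce this as an immediate consequence of Theorem~4.3 together with the ``Cauchy--Riemann type'' identity (\ref{4.3}), namely $\dfrac{\partial F_n(x,y)}{\partial y} = \dfrac{\partial F_{n-1}(x,y)}{\partial x}$. Since Theorem~4.3 gives a closed form for $\partial F_{n+1}/\partial x$, everything should reduce to a shift of index.

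First I would rewrite (\ref{4.3}) in the form $\dfrac{\partial F_n(x,y)}{\partial y} = \dfrac{\partial F_{n-1}(x,y)}{\partial x}$, valid for $n \geq 2$. Next I would apply Theorem~4.3 with its running index $n$ replaced by $n-2$ (so that the theorem's left-hand side $\partial F_{n+1}/\partial x$ becomes $\partial F_{n-1}/\partial x$); the hypothesis $n \geq 1$ of that theorem translates to the condition $n \geq 3$ stated in the corollary. This yields
\begin{equation*}
\frac{\partial F_{n-1}(x,y)}{\partial x} = \sum_{i=0}^{\lfloor (n-3)/2\rfloor} (-1)^{i}\,(n-2-2i)\,F_{n-2-2i}(x,y)\,y^{i},
\end{equation*}
and combining with (\ref{4.3}) gives the claimed expression for $\partial F_n/\partial y$ (up to the sign factor $(-1)^i$, which one should include to match Theorem~4.3, and which a direct check on $F_3,F_4,F_5$ confirms).

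There is essentially no obstacle here; the content of the corollary is already contained in Theorem~4.3. The only point requiring a little care is bookkeeping: verifying that the upper limit $\lfloor(n-3)/2\rfloor$ arises correctly from substituting $n-2$ into the bound $\lfloor(n-1)/2\rfloor$ of Theorem~4.3, and that the boundary cases $n=3,4$ reduce correctly (for $n=3,4$ the sum has a single term, matching the direct computations $\partial F_3/\partial y = 1$ and $\partial F_4/\partial y = 2x$). An alternative, equally short route would be a direct induction on $n$ using the recurrence (\ref{3.2}) differentiated with respect to $y$, entirely paralleling the induction in the proof of Theorem~4.3; but going through (\ref{4.3}) is the cleanest path and avoids repeating that calculation.
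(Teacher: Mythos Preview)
Your approach is correct and is precisely the intended derivation: the paper gives no separate proof of this corollary, placing it immediately after Theorem~4.3, so it is meant to follow from that theorem together with the identity~(\ref{4.3}), exactly as you outline. You are also right that the stated formula is missing the factor $(-1)^{i}$; the case $n=5$ already shows this, since $\partial F_{5}/\partial y = 3x^{2}+2y = 3F_{3}-yF_{1}$, not $3F_{3}+yF_{1}$.
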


\bigskip

\begin{theorem}
\label{4.5}%
\begin{equation*}
F_{n}(x,y)=\frac{1}{n}\left[ \frac{\partial }{\partial x}F_{n+1}(x,y)+y\frac{%
\partial }{\partial x}F_{n-1}(x,y)\right] .
\end{equation*}
\end{theorem}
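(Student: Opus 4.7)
My plan is to derive this identity directly from two facts already recorded in Section 3, namely the Lucas--Fibonacci bridge
\begin{equation*}
L_{n}(x,y)=F_{n+1}(x,y)+yF_{n-1}(x,y) \tag{3.5}
\end{equation*}
and the derivative formula
\begin{equation*}
\frac{\partial L_{n}(x,y)}{\partial x}=nF_{n}(x,y). \tag{3.7}
\end{equation*}
No induction or manipulation of the explicit binomial sum \eqref{3.4} should be needed.

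The steps I would carry out are as follows. First, I would differentiate both sides of \eqref{3.5} with respect to $x$; since $y$ is treated as an independent variable, the factor $y$ comes out of $\partial/\partial x$, yielding
\begin{equation*}
\frac{\partial L_{n}(x,y)}{\partial x}=\frac{\partial F_{n+1}(x,y)}{\partial x}+y\,\frac{\partial F_{n-1}(x,y)}{\partial x}.
\end{equation*}
Next, I would substitute the left-hand side using \eqref{3.7}, which gives
\begin{equation*}
nF_{n}(x,y)=\frac{\partial F_{n+1}(x,y)}{\partial x}+y\,\frac{\partial F_{n-1}(x,y)}{\partial x}.
\end{equation*}
Finally, I would divide through by $n$ (valid for $n\geq 1$) to obtain the claimed identity.

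There is essentially no obstacle here beyond citing the two preexisting identities; the only mild subtlety is the range of validity. The formula \eqref{3.7} is stated in \cite{12} for all $n\in\mathbb{Z}$, so one must simply note that the division by $n$ restricts the statement to $n\neq 0$, and in particular to the natural range $n\geq 1$ where $F_{n-1}(x,y)$ is defined by the recurrence \eqref{3.2} together with the initial conditions. If one wishes to avoid relying on \eqref{3.7} as a black box, an alternative but equivalent route would be to verify \eqref{3.7} by differentiating the explicit binomial expansion for $L_n(x,y)$; however, since \eqref{3.7} is already available in the paper, the short derivation above is the cleanest approach.
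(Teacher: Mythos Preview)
Your argument is correct: differentiating \eqref{3.5} in $x$ and then invoking \eqref{3.7} immediately yields $nF_n(x,y)=\dfrac{\partial F_{n+1}}{\partial x}+y\,\dfrac{\partial F_{n-1}}{\partial x}$, from which the claim follows for $n\ge 1$.

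This is, however, a genuinely different route from the paper's own proof. The paper does not cite \eqref{3.5} or \eqref{3.7} at all; instead it works entirely with the explicit binomial expansion \eqref{3.4}. It writes out $F_{n+1}(x,y)+yF_{n-1}(x,y)$ as a sum of binomial terms, shifts an index, combines $\binom{n-i}{i}+\binom{n-1-i}{i-1}$ into a single expression, differentiates the resulting closed form in $x$, and then recognizes the outcome as $nF_n(x,y)$ via \eqref{3.4} again. In effect the paper is re-deriving the content of \eqref{3.7} from scratch through binomial identities rather than quoting it. Your approach is shorter and more conceptual, since it leverages structure already recorded in Section~3; the paper's approach is more self-contained and computational, making no appeal to the Lucas polynomials or to \eqref{3.7}. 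Both are valid, but yours is the cleaner derivation given what the paper has already stated.
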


\begin{proof}
By using equation \ref{3.2} it can be written 
\begin{equation*}
F_{n+1}(x,y)=\dsum\limits_{i=0}^{\lfloor \frac{n}{2}\rfloor }\binom{n-i}{i}%
\left( x\right) ^{n-2i}y^{i},\text{ \ \ }n\geq 1
\end{equation*}%
and%
\begin{equation*}
F_{n-1}(x,y)=\dsum\limits_{i=0}^{\lfloor \frac{n-2}{2}\rfloor }\binom{n-2-i}{%
i}\left( x\right) ^{n-2-2i}y^{i},\text{ \ \ }n\geq 3.
\end{equation*}%
Then%
\begin{eqnarray*}
F_{n+1}(x,y)+yF_{n-1}(x,y) &=&\dsum\limits_{i=0}^{\lfloor \frac{n}{2}\rfloor
}\binom{n-i}{i}\left( x\right) ^{n-2i}y^{i}+y\dsum\limits_{i=0}^{\lfloor 
\frac{n-2}{2}\rfloor }\binom{n-2-i}{i}\left( x\right) ^{n-2-2i}y^{i} \\
&=&x^{n}+\dsum\limits_{i=1}^{\lfloor \frac{n}{2}\rfloor }\binom{n-i}{i}%
\left( x\right) ^{n-2i}y^{i}+\dsum\limits_{i=0}^{\lfloor \frac{n-2}{2}%
\rfloor }\binom{n-2-i}{i}\left( x\right) ^{n-2-2i}y^{i+1} \\
&=&x^{n}+\dsum\limits_{i=1}^{\lfloor \frac{n}{2}\rfloor }\binom{n-i}{i}%
\left( x\right) ^{n-2i}y^{i}+\dsum\limits_{i=1}^{\lfloor \frac{n-2}{2}%
\rfloor }\binom{n-1-i}{i-1}\left( x\right) ^{n-2-2i}y^{i} \\
&=&x^{n}+\dsum\limits_{i=1}^{\lfloor \frac{n}{2}\rfloor }\left[ \binom{n-i}{i%
}+\binom{n-1-i}{i-1}\right] \left( x\right) ^{n-2i}y^{i} \\
&=&x^{n}+n\dsum\limits_{i=1}^{\lfloor \frac{n}{2}\rfloor }\binom{n-1-i}{i-1}%
\frac{1}{i}\left( x\right) ^{n-2i}y^{i}.
\end{eqnarray*}%
Now by deriving the last form, we get 
\begin{equation*}
\frac{\partial }{\partial x}F_{n+1}(x,y)+y\frac{\partial }{\partial x}%
F_{n-1}(x,y)=nx^{n-1}+n\dsum\limits_{i=1}^{\lfloor \frac{n}{2}\rfloor }%
\binom{n-1-i}{i-1}\frac{n-2i}{i}\left( x\right) ^{n-1-2i}y^{i}.
\end{equation*}%
Then, 
\begin{eqnarray*}
\frac{1}{n}\left[ \frac{\partial }{\partial x}F_{n+1}(x,y)+y\frac{\partial }{%
\partial x}F_{n-1}(x,y)\right]  &=&x^{n-1}+\dsum\limits_{i=1}^{\lfloor \frac{%
n}{2}\rfloor }\binom{n-1-i}{i-1}\frac{n-2i}{i}\left( x\right) ^{n-1-2i}y^{i}
\\
&=&\dsum\limits_{i=0}^{\lfloor \frac{n-1}{2}\rfloor }\binom{n-1-i}{i}\left(
x\right) ^{n-1-2i}y^{i}=F_{n}(x,y)
\end{eqnarray*}

So we are done. 
\end{proof}

\bigskip

\begin{theorem}
\begin{equation*}
\frac{\partial F_{n}(x,y)}{\partial x}=\frac{\left( n+1\right)
F_{n+1}(x,y)+y\left( n-1\right) F_{n-1}(x,y)-2xF_{n}(x,y)}{x^{2}+4y},\text{
\ \ }n\geq 1.
\end{equation*}
\end{theorem}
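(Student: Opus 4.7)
The plan is to avoid induction entirely by exploiting the two Lucas-polynomial identities recorded in Section 3, namely (3.6), which reads
\[
(x^{2}+4y)F_{n}(x,y)=L_{n+1}(x,y)+yL_{n-1}(x,y),
\]
and (3.7), which gives $\frac{\partial L_{m}(x,y)}{\partial x}=mF_{m}(x,y)$. Together these let us differentiate a multiple of $F_n$ whose derivative we already know.

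Concretely, I would differentiate both sides of (3.6) with respect to $x$, treating $y$ as independent of $x$. On the left this yields, by the product rule,
\[
2xF_{n}(x,y)+(x^{2}+4y)\frac{\partial F_{n}(x,y)}{\partial x}.
\]
On the right, applying (3.7) to $L_{n+1}$ and $L_{n-1}$ gives
\[
\frac{\partial L_{n+1}(x,y)}{\partial x}+y\frac{\partial L_{n-1}(x,y)}{\partial x}=(n+1)F_{n+1}(x,y)+y(n-1)F_{n-1}(x,y).
\]
Equating the two expressions and isolating $(x^{2}+4y)\,\partial F_{n}/\partial x$ on one side produces
\[
(x^{2}+4y)\frac{\partial F_{n}(x,y)}{\partial x}=(n+1)F_{n+1}(x,y)+y(n-1)F_{n-1}(x,y)-2xF_{n}(x,y).
\]
Since the standing assumption $x^{2}+4y\neq 0$ from Section 3 lets us divide through, the claimed formula follows immediately.

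There is essentially no hard step here; the entire argument is a one-line differentiation of (3.6) combined with (3.7). The only thing worth verifying is that the formula behaves correctly at the boundary $n=1$, where $F_{0}(x,y)=0$ and $\partial_{x}F_{1}(x,y)=0$: the right-hand side collapses to $(2F_{2}-2xF_{1})/(x^{2}+4y)=(2x-2x)/(x^{2}+4y)=0$, matching the convention stated just before Theorem 4.4. Thus the identity holds uniformly for $n\geq 1$.
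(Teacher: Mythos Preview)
Your proof is correct and follows exactly the same approach as the paper: differentiate identity (3.6) with respect to $x$ and then substitute (3.7) to replace the Lucas-polynomial derivatives. Your write-up simply makes explicit the division by $x^{2}+4y$ and the boundary check at $n=1$, which the paper leaves implicit.
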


\begin{proof}
By deriving equation \ref{3.6} according to variable $x$ we get%
\begin{equation*}
2xF_{n}(x,y)+\left( x^{2}+4y\right) \frac{\partial F_{n}(x,y)}{\partial x}=%
\frac{\partial L_{n+1}(x,y)}{\partial x}+y\frac{\partial L_{n-1}(x,y)}{%
\partial x}.
\end{equation*}%
At this point, by using equation \ref{3.7} the conclusion can be seen. 
\end{proof}

\bigskip

\begin{corollary}
For $n>1$%
\begin{equation*}
\frac{\partial F_{n}(x,y)}{\partial x}=\frac{\left( n\right)
F_{n}(x,y)+y\left( n-2\right) F_{n-2}(x,y)-2xF_{n-1}(x,y)}{x^{2}+4y}.
\end{equation*}
\end{corollary}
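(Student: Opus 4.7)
The plan is extremely short: the statement of the corollary is nothing more than the preceding theorem with the index shifted. Specifically, the theorem establishes
\begin{equation*}
\frac{\partial F_{m}(x,y)}{\partial x}=\frac{(m+1)F_{m+1}(x,y)+y(m-1)F_{m-1}(x,y)-2xF_{m}(x,y)}{x^{2}+4y}
\end{equation*}
for every $m\geq 1$. So I would simply substitute $m=n-1$. The hypothesis $n>1$ of the corollary translates exactly into $m=n-1\geq 1$, which is the range of validity of the theorem, and the substitution converts $(m+1)F_{m+1}$, $y(m-1)F_{m-1}$, and $2xF_{m}$ into $nF_{n}$, $y(n-2)F_{n-2}$, and $2xF_{n-1}$, respectively, yielding the required identity.

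There is essentially no obstacle: the only thing to check is that the index shift is legitimate, i.e.\ that the theorem's proof (which relied on differentiating equation \ref{3.6} and applying \ref{3.7}) does not secretly require $n\geq 2$ or some stronger lower bound. Since the theorem explicitly covers $n\geq 1$, substituting $m=n-1$ with $n\geq 2$ is safe. Thus the proof I would write is a single sentence invoking the previous theorem at the shifted index.
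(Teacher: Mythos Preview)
Your index shift is the right idea, but you have overlooked what happens to the \emph{left-hand side}. Replacing $m$ by $n-1$ in the theorem turns $\dfrac{\partial F_{m}}{\partial x}$ into $\dfrac{\partial F_{n-1}}{\partial x}$, not into $\dfrac{\partial F_{n}}{\partial x}$ as the corollary asserts. So the one-line substitution you describe does \emph{not} produce the displayed identity. A direct check confirms the problem: for $n=2$ the left side is $\partial F_{2}/\partial x=1$, while the right side is $\bigl(2x+0-2x\bigr)/(x^{2}+4y)=0$.

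What is going on is that the corollary, as printed, carries a typo on the left-hand side. Comparing with the parallel Corollaries~4.2 and~4.4 in the paper, each of which is obtained from the preceding theorem by the index shift $n\mapsto n-1$ together with the relation~(4.3), $\partial F_{n}/\partial y=\partial F_{n-1}/\partial x$, the intended left-hand side here is $\dfrac{\partial F_{n}(x,y)}{\partial y}$. With that correction your argument is exactly the paper's: apply the theorem at $n-1$ and invoke~(4.3). The gap in your write-up is therefore not a missing idea but a failure to notice that the substitution does not match the stated left-hand side; you should flag the misprint rather than claim the shift alone suffices.
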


\bigskip

\begin{theorem}
\begin{equation*}
\frac{\partial ^{r}F_{n+1}(x,y)}{\partial x^{r}}=\left\{ 
\begin{array}{l}
\text{For }n<r,\text{ \ \ \ \ \ \ \ \ \ \ \ \ \ \ \ \ \ \ \ \ \ \ \ \ \ \ \
\ \ \ \ \ \ \ \ \ \ }0\text{ \ \ \ \ \ \ \ \ \ \ \ \ \ \ \ \ \ \ \ \ \ \ \ \
\ \ \ \ \ \ \ \ \ \ \ \ \ \ \ \ \ \ \ \ \ \ \ \ \ \ \ \ \ \ \ \ \ \ \ \ \ \
\ \ \ \ } \\ 
\text{For }n=r,\text{ \ \ \ \ \ \ \ \ \ \ \ \ \ \ \ \ \ \ \ \ \ \ \ \ \ \ \
\ \ \ \ \ \ \ \ \ \ }r!\text{ \ \ \ \ \ \ \ \ \ \ \ \ \ \ \ \ \ \ \ \ \ \ \
\ \ \ \ \ \ \ \ \ \ \ \ \ \ \ \ \ \ \ \ \ \ \ \ \ \ \ \ \ \ \ \ \ \ \ \ \ \
\ } \\ 
\text{For }n>r,\text{ \ \ \ \ \ \ \ \ }\frac{1}{n-r}\left[ nx\frac{\partial
^{r}F_{n}(x,y)}{\partial x^{r}}+y\left( n+r\right) \frac{\partial
^{r}F_{n-1}(x,y)}{\partial x^{r}}\right]%
\end{array}%
\right. .
\end{equation*}
\end{theorem}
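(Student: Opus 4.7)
The plan is to handle the three cases of the piecewise definition separately. The first two ($n<r$ and $n=r$) follow immediately from the closed form \eqref{3.4}: $F_{n+1}(x,y)=\sum_{i=0}^{\lfloor n/2\rfloor}\binom{n-i}{i}x^{n-2i}y^{i}$ is a polynomial in $x$ of degree exactly $n$ with leading coefficient $1$, so its $r$-th partial derivative in $x$ vanishes identically for $r>n$ and reduces to the constant $r!$ when $r=n$ (only the $i=0$ monomial $x^{n}$ survives, and $\partial_x^{r} x^{r}=r!$).

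The substantive case is $n>r$, which I would derive by combining two identities. First, applying Leibniz's rule to differentiate the basic recurrence \eqref{3.2}, namely $F_{n+1}=xF_{n}+yF_{n-1}$, exactly $r$ times with respect to $x$ gives
\begin{equation*}
\frac{\partial^{r}F_{n+1}(x,y)}{\partial x^{r}}
= x\,\frac{\partial^{r}F_{n}(x,y)}{\partial x^{r}}
+ r\,\frac{\partial^{r-1}F_{n}(x,y)}{\partial x^{r-1}}
+ y\,\frac{\partial^{r}F_{n-1}(x,y)}{\partial x^{r}},
\end{equation*}
since $\partial_x^{k}x=0$ for $k\ge 2$. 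The awkward middle term is an $(r-1)$-th derivative, and the key observation is that Theorem \ref{4.5}, rewritten as $nF_{n}(x,y)=\partial_x F_{n+1}(x,y)+y\,\partial_x F_{n-1}(x,y)$ and then differentiated $r-1$ further times in $x$, produces precisely
\begin{equation*}
n\,\frac{\partial^{r-1}F_{n}(x,y)}{\partial x^{r-1}}
= \frac{\partial^{r}F_{n+1}(x,y)}{\partial x^{r}}
+ y\,\frac{\partial^{r}F_{n-1}(x,y)}{\partial x^{r}},
\end{equation*}
which expresses the unwanted $(r-1)$-th derivative of $F_{n}$ purely in terms of $r$-th derivatives of $F_{n+1}$ and $F_{n-1}$.

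Substituting this expression for $\partial^{r-1}F_{n}/\partial x^{r-1}$ into the Leibniz identity above and collecting the $\partial^{r}F_{n+1}/\partial x^{r}$ terms on the left produces a factor of $(n-r)/n$ on the left-hand side and a coefficient $y(n+r)/n$ in front of $\partial^{r}F_{n-1}/\partial x^{r}$ on the right. Multiplying through by $n/(n-r)$, which is legal precisely because $n>r$, then yields the claimed recurrence. The only genuine obstacle is recognising that Theorem \ref{4.5} differentiated one fewer time than the target order provides exactly the auxiliary relation needed to remove the $(r-1)$-th derivative introduced by Leibniz; once this is noticed, the remaining computation is a short piece of linear algebra.
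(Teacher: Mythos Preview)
Your proof is correct, but it takes a somewhat different route from the paper's. The paper argues by induction on $r$: it verifies the base case $r=1$ (using the once-differentiated recurrence together with Theorem~\ref{4.5}), and for the inductive step it differentiates the induction hypothesis
\[
\frac{\partial^{r}F_{n+1}}{\partial x^{r}}=\frac{1}{n-r}\Bigl[nx\,\frac{\partial^{r}F_{n}}{\partial x^{r}}+y(n+r)\,\frac{\partial^{r}F_{n-1}}{\partial x^{r}}\Bigr]
\]
once more in $x$, combines the result with the Leibniz identity for $\partial_x^{r+1}F_{n+1}$, and eliminates $\partial_x^{r}F_{n}$ between the two. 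The boundary cases $n<r$ and $n=r$ are likewise carried through the induction.

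You instead give a direct argument: the cases $n\le r$ come straight from the explicit polynomial \eqref{3.4}, and for $n>r$ you pair the Leibniz identity with Theorem~\ref{4.5} differentiated $r-1$ times, which supplies the relation $n\,\partial_x^{r-1}F_{n}=\partial_x^{r}F_{n+1}+y\,\partial_x^{r}F_{n-1}$ needed to eliminate the stray $(r-1)$-th derivative. This avoids induction on $r$ altogether and makes the role of Theorem~\ref{4.5} at every order transparent, whereas the paper only invokes it at the base step and then lets the induction propagate it implicitly. Both proofs ultimately hinge on the same two ingredients (the Leibniz-differentiated recurrence and Theorem~\ref{4.5}); yours just combines them once at general $r$ rather than bootstrapping from $r$ to $r+1$.
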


\begin{proof}
We proof this by induction. For $r=1$, three subcases occur; 

\begin{itemize}
\item If $n<1$, then $\frac{\partial F_{1}(x,y)}{\partial x}=0$,

\item If $n=1$, then $\frac{\partial F_{2}(x,y)}{\partial x}=\left( x\right)
^{^{\prime }}=1!$,

\item If $n>1$, then by deriving the equation \ref{3.2} according to
variable $x$ we get 
\begin{equation*}
\frac{\partial F_{n+1}(x,y)}{\partial x}=F_{n}(x,y)+x\frac{\partial
F_{n}(x,y)}{\partial x}+y\frac{\partial F_{n-1}(x,y)}{\partial x}
\end{equation*}%
and by Theorem \ref{4.5} the following conclusion is obtained: 
\begin{equation*}
\frac{\partial F_{n+1}(x,y)}{\partial x}=\frac{1}{n-1}\left[ nx\frac{%
\partial F_{n}(x,y)}{\partial x}+y\left( n+1\right) \frac{\partial
F_{n-1}(x,y)}{\partial x}\right] 
\end{equation*}%
So, the claim holds for $r=1$. Let us suppose that the equation is true for $%
r$-th partial derivative and we will induct on $r$. 

\item If $n<r+1$, from induction hypothesis we can write $\frac{\partial
^{r+1}F_{n+1}(x,y)}{\partial x^{r+1}}=0$,

\item If $n=r+1$, by deriving the equation \ref{3.2} ) $\left( r+1\right) $
times according to variable $x$ we get 
\begin{eqnarray}
\frac{\partial ^{r+1}F_{n+1}(x,y)}{\partial x^{r+1}} &=&\left( r+1\right) 
\frac{\partial ^{r}F_{n}(x,y)}{\partial x^{r}}+x\frac{\partial
^{r+1}F_{n}(x,y)}{\partial x^{r+1}}+y\frac{\partial ^{r+1}F_{n-1}(x,y)}{%
\partial x^{r+1}}  \TCItag{4.4}  \label{4.4} \\
&=&\left( r+1\right) r!=\left( r+1\right) !  \notag
\end{eqnarray}

\item If $n>r+1$, by considering the induction hypothesis%
\begin{equation*}
\frac{\partial ^{r}F_{n+1}(x,y)}{\partial x^{r}}=\frac{1}{n-r}\left[ nx\frac{%
\partial ^{r}F_{n}(x,y)}{\partial x^{r}}+y\left( n+r\right) \frac{\partial
^{r}F_{n-1}(x,y)}{\partial x^{r}}\right] .
\end{equation*}%
If this equation is derived according to variable $x$ we get 
\begin{equation}
\frac{\partial ^{r+1}F_{n+1}(x,y)}{\partial x^{r+1}}=\frac{1}{n-r}\left[ n%
\frac{\partial ^{r}F_{n}(x,y)}{\partial x^{r}}+nx\frac{\partial
^{r+1}F_{n}(x,y)}{\partial x^{r+1}}+y\left( n+r\right) \frac{\partial
^{r+1}F_{n-1}(x,y)}{\partial x^{r+1}}\right] .  \tag{4.5}  \label{4.6}
\end{equation}%
If the equations \ref{4.4} and \ref{4.6} are considered together the desired
recurrence relation can be obtained: 
\begin{equation*}
\frac{\partial ^{r+1}F_{n+1}(x,y)}{\partial x^{r+1}}=\frac{1}{n-r-1}\left[ nx%
\frac{\partial ^{r+1}F_{n}(x,y)}{\partial x^{r+1}}+y\left( n+r+1\right) 
\frac{\partial ^{r+1}F_{n-1}(x,y)}{\partial x^{r+1}}\right] 
\end{equation*}
\end{itemize}
\end{proof}

\bigskip

\begin{corollary}
For $n>1$%
\begin{equation*}
\frac{\partial ^{r}F_{n+1}(x,y)}{\partial x^{r}}=\left\{ 
\begin{array}{l}
\text{For }n<r,\text{ \ \ \ \ \ \ \ \ \ \ \ \ \ \ \ \ \ \ \ \ \ \ \ \ \ \ \
\ \ \ \ \ \ \ \ \ \ \ \ \ \ \ \ \ \ \ \ \ \ \ \ \ \ \ \ \ }0\text{\ \ \ \ \
\ \ \ \ \ \ \ \ \ \ \ \ \ \ \ \ \ \ \ \ \ \ \ \ \ \ \ \ \ \ \ \ \ \ \ \ \ \
\ \ \ \ \ \ \ \ \ \ \ \ \ \ \ \ \ \ \ \ \ \ \ \ } \\ 
\text{For }n=r,\text{ \ \ \ \ \ \ \ \ \ \ \ \ \ \ \ \ \ \ \ \ \ \ \ \ \ \ \
\ \ \ \ \ \ \ \ \ \ \ \ \ \ \ \ \ \ \ \ \ \ \ \ \ \ \ \ \ }r!\text{\ \ \ \ \
\ \ \ \ \ \ \ \ \ \ \ \ \ \ \ \ \ \ \ \ \ \ \ \ \ \ \ \ \ \ \ \ \ \ \ \ \ \
\ \ \ \ \ \ \ \ \ \ \ \ \ \ \ \ \ \ \ \ \ \ } \\ 
\text{For }n>r,\text{ \ \ \ \ \ \ \ \ \ \ }\frac{1}{n-r-1}\left[ \left(
n-1\right) x\frac{\partial ^{r}F_{n-1}(x,y)}{\partial x^{r}}+y\left(
n-1+r\right) \frac{\partial ^{r}F_{n-2}(x,y)}{\partial x^{r}}\right] \text{
\ \ \ }%
\end{array}%
\right. .
\end{equation*}
\end{corollary}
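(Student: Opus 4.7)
The plan is to derive this corollary directly from the preceding theorem by a downward index shift. If one replaces $n$ by $n-1$ throughout the recursive branch of the theorem, the resulting coefficients, denominator and right-hand derivatives coincide with those appearing in the third line of the corollary.

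For the two closed-form cases, I would appeal to the explicit expansion (3.4), which writes $F_{n+1}(x,y)$ as a polynomial in $x$ of degree $n$. When $n<r$, every monomial has $x$-degree strictly below $r$, so applying $\partial^{r}/\partial x^{r}$ annihilates the polynomial. When $n=r$, only the $i=0$ term $x^{n}=x^{r}$ contributes under $r$ differentiations, yielding $r!$; every other monomial, carrying the factor $y^{i}$ with $i\geq 1$, has $x$-degree at most $n-2<r$ and is killed.

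For the recursive branch $n>r$, I would invoke the preceding theorem at the shifted index $n-1$, that is, apply it to $F_{n}$ in place of $F_{n+1}$. The substitution $n\mapsto n-1$ converts the prefactor $1/(n-r)$ into $1/((n-1)-r)=1/(n-r-1)$, the coefficient $nx$ into $(n-1)x$, the coefficient $y(n+r)$ into $y((n-1)+r)=y(n-1+r)$, and the two derivatives on the right shift from $\partial^{r}F_{n}/\partial x^{r}$ and $\partial^{r}F_{n-1}/\partial x^{r}$ to $\partial^{r}F_{n-1}/\partial x^{r}$ and $\partial^{r}F_{n-2}/\partial x^{r}$, exactly matching the stated formula. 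The theorem's recursive hypothesis $n-1>r$ translates to $n>r+1$; the transitional value $n=r+1$ (at which the denominator $n-r-1$ would vanish) is absorbed by the closed-form branch of the theorem at the shifted index, and hence does not require the recursive formula.

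The main obstacle is bookkeeping rather than any new analytical step: one must verify the correspondence between the corollary's three ranges for $n$ and the three ranges produced by the shifted theorem, and must reconcile the left-hand side naturally emerging from the shift with the form $\partial^{r}F_{n+1}/\partial x^{r}$ stated in the corollary. No new identity, no new induction, and no further application of Leibniz's rule is required beyond what is already established in the theorem itself.
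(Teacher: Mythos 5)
Your index-shift plan is exactly the derivation the paper tacitly intends: the corollary appears immediately after the theorem with no proof of its own, and substituting $n\mapsto n-1$ in the theorem's recursive branch is clearly the mechanism. Your computation of the shifted right-hand side is correct. The genuine gap is the step you defer as ``bookkeeping'': reconciling the left-hand side produced by the shift, namely $\partial^{r}F_{n}/\partial x^{r}$, with the $\partial^{r}F_{n+1}/\partial x^{r}$ printed in the corollary. This reconciliation is not bookkeeping; it is impossible, because the statement as printed is false. Take $r=1$, $n=3$ (so $n>1$ and $n>r$): the printed left side is
\begin{equation*}
\frac{\partial F_{4}(x,y)}{\partial x}=\frac{\partial}{\partial x}\left(x^{3}+2xy\right)=3x^{2}+2y,
\end{equation*}
while the printed right side is
\begin{equation*}
\frac{1}{3-1-1}\left[2x\,\frac{\partial F_{2}(x,y)}{\partial x}+3y\,\frac{\partial F_{1}(x,y)}{\partial x}\right]=2x\cdot 1+3y\cdot 0=2x,
\end{equation*}
which equals $\partial F_{3}/\partial x$, not $\partial F_{4}/\partial x$. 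The printed branch is moreover ill-defined at $n=r+1$, a value its own condition $n>r$ permits, since the denominator $n-r-1$ vanishes there; you noticed this but asserted it is ``absorbed by the closed-form branch,'' which is only true after the case conditions are shifted along with the index.

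The repair, which your write-up stops short of making explicit, is that \emph{all} occurrences of $n$ must shift, including the case boundaries: with left-hand side $\partial^{r}F_{n}/\partial x^{r}$ one has zero for $n-1<r$ (since $F_{n}$ has $x$-degree $n-1$, not $n$), the value $r!$ for $n-1=r$, and the recursion for $n-1>r$. Your justification of the $n<r$ and $n=r$ branches via the expansion (3.4) is sound for $F_{n+1}$ but belongs to the \emph{unshifted} indexing, while the recursive branch you derive belongs to the shifted one; as printed, the corollary mixes the two, and your proposal inherits that inconsistency rather than resolving it. A complete proof must either correct the left-hand side to $\partial^{r}F_{n}/\partial x^{r}$ and shift the three case conditions accordingly, or keep $F_{n+1}$ on the left, in which case the third branch must be the theorem's own $\frac{1}{n-r}\left[nx\,\partial^{r}F_{n}/\partial x^{r}+y(n+r)\,\partial^{r}F_{n-1}/\partial x^{r}\right]$ and the corollary adds nothing.
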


\bigskip 

\section{Acknowledgements}

The authors thank to the referee for a through reading and benefical
suggestions.

\bigskip

\end{document}